\def\blfootnote{\xdef\@thefnmark{}\@footnotetext}
\newtheorem{theorem}{Theorem}[section]
\newtheorem{lemma}[theorem]{Lemma}
\newtheorem{proposition}[theorem]{Proposition}
\newtheorem{corollary}[theorem]{Corollary}
\newtheorem{problem}[theorem]{Problem}
\theoremstyle{definition}
\let\leq=\leqslant
\let\geq=\geqslant
\begin{document}

\title[Nonsoluble Length Of Finite Groups]{Nonsoluble Length Of Finite Groups with Commutators of Small Order}

\keywords{finite groups, nonsoluble length, finite simple groups}
\subjclass[2010]{20D30, 20E34}

\author{P. Shumyatsky}
\address{Department of Mathematics, University of Brasilia, DF~70910-900, Brazil}
\email{pavel@unb.br}
\author{Y. Contreras-Rojas}
\address{Department of Mathematics, University of Brasilia, DF~70910-900, Brazil}
\email{Yerkocr@mat.unb.br}

\begin{abstract} Let $p$ be a prime. Every finite group $G$ has a normal series each of whose quotients either is $p$-soluble or is a direct product of nonabelian simple groups of orders divisible by $p$. The non-$p$-soluble length $\lambda_p(G)$ is defined as the minimal number of non-$p$-soluble quotients in a series of this kind. 

We deal with the question whether, for a given prime $p$ and a given proper group variety ${\frak V}$, there is a bound for the non-$p$-soluble length $\lambda_p$ of finite groups whose Sylow $p$-subgroups belong to ${\frak V}$.
 Let the word $w$ be a multilinear commutator. In the present paper we answer the question in the affirmative in the case where $p$ is odd and the variety is the one of groups satisfying the law $w^e\equiv1$.
\end{abstract}

\maketitle

\blfootnote{This work was supported by CNPq-Brazil}

\section{Introduction}

Every finite group $G$ has a normal series each of whose quotients either is soluble or is a direct product of nonabelian simple groups. In \cite{junta2} the \emph{nonsoluble length} of $G$, denoted by $\lambda (G)$, was defined as the minimal number of non-soluble factors in a series of this kind: if
$$
1=G_0\leq G_1\leq \dots \leq G_{2h+1}=G
$$
is a shortest normal series in which  for $i$  even  the quotient $G_{i+1}/G_{i}$ is soluble (possibly trivial), and for $i$ odd the quotient $G_{i+1}/G_{i}$   is a (non-empty) direct product of nonabelian simple groups, then the nonsoluble length $\lambda (G)$ is equal to $h$.  For any prime $p$, a similar notion of non-$p$-soluble length $\lambda _p (G)$ is defined by replacing ``soluble'' by ``$p$-soluble'' and ``simple'' by ``simple of order divisible by $p$''. Recall that a finite group is said to be \emph{$p$-soluble} if it has a normal series each of whose quotients is either a $p$-group or a $p'$-group. Of course, $\lambda (G)= \lambda_2 (G)$, since groups of odd order are soluble by the Feit--Thompson theorem \cite{fei-tho}.

 Upper bounds for the nonsoluble and non-$p$-soluble length appear in the study of various problems on finite, residually finite, and profinite groups. For example, such  bounds were implicitly obtained in the Hall--Higman paper \cite{ha-hi} as part of their reduction of the Restricted Burnside Problem to $p$-groups. Such bounds were also a part of Wilson's  theorem \cite{wil83} reducing the problem of local finiteness of periodic compact groups to pro-$p$ groups. (Both problems were solved by Zelmanov \cite{zel89, zel90, zel91, zel92}). More recently, bounds for the nonsoluble length were needed in the study of verbal subgroups in finite and profinite groups \cite{dms1, 68, austral,austral1}.

There is a long-standing problem on $p$-length due to Wilson (Problem 9.68 in Kourovka Notebook \cite{kour}): \emph{for a given prime $p$ and a given proper group variety ${\frak V}$, is there a bound for the $p$-length of finite $p$-soluble groups whose Sylow $p$-subgroups belong to ${\frak V}$?}

In \cite{junta2} the following problem, analogous to Wilson's problem, was suggested. 

\begin{problem}\label{prob}
For a given prime $p$ and a given proper group variety ${\frak V}$, is there a bound for the non-$p$-soluble length $\lambda _p$ of finite groups whose Sylow $p$-subgroups belong to ${\frak V}$?
\end{problem}

It was shown in \cite{junta2} that an affirmative answer to Problem~\ref{prob} would follow from an affirmative answer to Wilson's problem. On the other hand, Wilson's problem  so far has seen little progress beyond the affirmative answers for soluble varieties and varieties of bounded exponent \cite{ha-hi} (and, implicit in the Hall--Higman theorems \cite{ha-hi}, for ($n$-Engel)-by-(finite exponent) varieties). Problem~\ref{prob} seems to be more tractable. In particular, in \cite{junta2} a positive answer to Problem~\ref{prob} was obtained in the case of any variety that is a product of several soluble varieties and varieties of finite exponent.

Given a group-word $w=w(x_1,\ldots,x_n)$, we view it as a function defined on any group $G$. The subgroup of $G$ generated by all values $w(g_1,\ldots,g_n)$, where $g_i\in G$ is called the verbal subgroup corresponding to the word $w$. In the present paper we deal with the so called multilinear commutators (otherwise known under the name of outer commutator words). These are words which are obtained by nesting commutators, but using always different variables. Thus the word $[[x_1,x_2],[x_3,x_4,x_5],x_6]$ is a multilinear commutator while the Engel word $[x_1,x_2,x_2,x_2]$ is not. The number of different variables involved in a multilinear commutator word $w$ is called weight of $w$.

An important family of multilinear commutators consists of the simple commutators $\gamma_k$, given by $$\gamma_1=x_1, \qquad \gamma_k=[\gamma_{k-1},x_k]=[x_1,\ldots,x_k].$$ The corresponding verbal subgroups $\gamma_k(G)$ are the terms of the lower central series of $G$. Another distinguished sequence of multilinear commutator words is formed by the derived words $\delta_k$, on $2^k$ variables, which are defined recursively by $$\delta_0=x_1,\qquad \delta_k=[\delta_{k-1}(x_1,\ldots,x_{2^{k-1}}),\delta_{k-1}(x_{2^{k-1}+1},\ldots,x_{2^k})].$$ Of course $\delta_k(G)=G^{(k)}$, the $k$th derived subgroup of $G$. 

In the present article we prove the following result.

\begin{theorem}\label{aaaa} Let $n$ be a positive integer and $p$ an odd prime. Let $P$ be a Sylow $p$-subgroup of a finite group $G$ and assume that all $\delta_n$-values on elements of $P$ have order dividing $p^e$. Then $\lambda_p(G)\leq n+e-1$.
\end{theorem}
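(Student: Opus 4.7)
The plan is to argue by induction on $n$. For the base case $n=1$, the hypothesis says that every commutator in $P$ has order dividing $p^e$. For $p$ odd, a Hall--Petrescu-style argument (essentially Mann's lemma) shows that $[P,P]$ itself has exponent dividing $p^e$, so the Sylow $p$-subgroup of $G$ has a derived subgroup of bounded exponent. Combined with the results of \cite{junta2} on varieties that are products of a soluble variety and a variety of finite exponent, this yields $\lambda_p(G)\le e$, which is exactly the required bound $n+e-1$ in this case.

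For the inductive step I assume the theorem for $n-1$ and pass to a minimal counterexample $G$. After the usual reductions --- killing the $p'$-radical and quotienting by any $p$-soluble normal subgroup (which does not change $\lambda_p$ and only strengthens the hypothesis on the Sylow $p$-subgroup of the quotient) --- I may assume that $G$ has a unique minimal normal subgroup $M$, and that $M$ is a direct product of non-abelian simple groups of orders divisible by $p$. Choose a Sylow $p$-subgroup $Q$ of $M$ contained in $P$; its $\delta_n$-values still have order dividing $p^e$. The strategy is then to show that the Sylow $p$-subgroup $\bar P$ of $\bar G=G/M$ satisfies a strictly stronger hypothesis, namely that either its $\delta_{n-1}$-values have order dividing $p^e$, or its $\delta_n$-values have order dividing $p^{e-1}$. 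In either case the inductive hypothesis gives $\lambda_p(\bar G)\le n+e-2$ and hence $\lambda_p(G)\le\lambda_p(\bar G)+1\le n+e-1$.

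The main obstacle is precisely this improvement step: a priori $\bar P$ inherits only the original hypothesis, with the same $n$ and the same $e$. The additional structure must come from the action of $P$ on $M$. Given $\delta_{n-1}$-values $\bar a,\bar b\in\bar P$, lift them to $\delta_{n-1}$-values $a,b\in P$; the commutator $[a,b]$ is a $\delta_n$-value of order dividing $p^e$. Conjugation of elements of $Q$ by $a$ and $b$ and by their powers produces further $\delta_n$-values of $P$ lying inside $Q$, whose orders are constrained by the hypothesis. Using CFSG-based facts about Sylow $p$-subgroups of non-abelian simple groups --- specifically, that such Sylow subgroups are rich enough to detect products and powers of commutator actions --- one aims to squeeze out an extra relation, either forcing $[\bar a,\bar b]^{p^{e-1}}=1$ or $[\bar a,\bar b]$ itself to be trivial in $\bar P$. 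Making this squeeze precise, with the correct classification-theoretic input on which simple groups can occur as chief factors of $M$ and how their Sylow $p$-subgroups are controlled by outer automorphisms in $\bar P$, is the substantive technical step of the proof.
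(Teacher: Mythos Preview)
Your inductive framework is in the right spirit, but the proposal has a genuine gap: the ``substantive technical step'' that you yourself identify is never carried out, and your suggested mechanism for it --- looking at $[a,b]$ for two lifted $\delta_{n-1}$-values and invoking unspecified CFSG facts about Sylow subgroups of simple groups --- is not how the argument goes and does not obviously work. The dichotomy you aim for (either $\delta_{n-1}$-values in $\bar P$ have order dividing $p^e$, or $\delta_n$-values have order dividing $p^{e-1}$) is not what is actually established, and there is no evident way to force it from the hypothesis by the kind of lifting you describe.

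The paper's proof is organised differently and is more concrete. First, it quotients not by a minimal normal subgroup $M$ but by the $p$-kernel subgroup $K_p(G)$, which has $\lambda_p\le 1$ (this is where Schreier's conjecture, hence the classification, enters --- and it is the \emph{only} place). Second, rather than hoping for a dichotomy on $\delta_{n-1}$- or $\delta_n$-values, the paper introduces a finer second induction parameter: let $l$ be the maximum of $\log_p|[b,a,a]|$ over all $\delta_{n-1}$-values $a\in P$ and all $b\in X_P(a)$ (the set of $b$ maximising $|[b,a,a]|$). Since $[b,a,a]=[a^{-b},a]^a$ is a $\delta_n$-value, $l\le e$. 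The heart of the argument is then Proposition~\ref{bbb}: for any $a\in P$, $b\in X_P(a)$ with $|[b,a,a]|=q>1$, the element $[b,a,a]$ has no orbit of length $q$ on the simple factors of $Soc(G/R_p(G))$. This is proved by an explicit commutator calculation (perturb $b$ to $bx$ with $x$ in a simple factor, expand $[bx,a,a]$, and use maximality of $|[b,a,a]|$ together with $p$ odd to reach a contradiction), with no further classification input. It follows that the image of every such $[b,a,a]$ in $G/K_p(G)$ has order dividing $p^{l-1}$, so the $l$-invariant of $G/K_p(G)$ is at most $l-1$, and induction on $l$ finishes. The crucial ideas you are missing are the use of the \emph{double} commutator $[b,a,a]$ with $b$ chosen to \emph{maximise} its order, and the permutation-theoretic fact that a $p$-element cannot have a regular orbit of length equal to its own order when that element arises as a commutator in the way Proposition~\ref{bbb} describes.
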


The proof of Theorem \ref{aaaa} uses the classification of finite simple groups in its application to Schreier's Conjecture, that the outer automorphism groups of finite simple groups are soluble. It seems likely that Theorem \ref{aaaa} remains valid also for $p=2$ but so far we have not been able to find a proof for that case. The case where $w=x$ was handled in \cite{junta2} for any prime $p$. Further, it is immediate from \cite[Proposition 2.3]{68} that if the order of $[x,y]$ divides $2^e$ for each $x,y$ in a Sylow 2-subgroup of $G$, then $\lambda(G)\leq e$. Hence, Theorem \ref{aaaa} is valid for any prime $p$ whenever $n\leq2$.

It is well-known that if $w$ is a multilinear commutator word on $n$ variables, then each $\delta_n$-value in a group $G$ is a $w$-value (see for example \cite[Lemma 4.1]{S2}). Therefore our next result is an immediate corollary of Theorem \ref{aaaa}.

\begin{corollary}\label{aabb} Let $p$ an odd prime and $w$ a multilinear commutator word of weight $n$. Let $P$ be a Sylow $p$-subgroup of a finite group $G$ and assume that all $w$-values on elements of $P$ have order dividing $p^e$. Then $\lambda_p(G)\leq n+e-1$.
\end{corollary}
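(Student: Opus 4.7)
The plan is to reduce Corollary~\ref{aabb} to Theorem~\ref{aaaa} by invoking a standard fact about multilinear commutators: if $w = w(x_1,\ldots,x_n)$ is a multilinear commutator of weight $n$ and $H$ is any group, then every $\delta_n$-value in $H$ can be realized as a $w$-value in $H$. This is precisely the content of \cite[Lemma 4.1]{S2}, already mentioned in the paragraph preceding the corollary. The intuition is purely syntactic: the bracketing tree of $w$ has $n$ leaves, and by substituting for each leaf an appropriately chosen iterated commutator of the $2^n$ elements appearing in a given $\delta_n$-value, one can arrange that the full expansion of $w$ on these substitutes equals that $\delta_n$-value. Because each substitute is built from elements of $H$ using only group operations, all substitutes automatically lie in $H$; in particular, if the underlying elements are taken from $P$, the substitutes are in $P$.

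With this tool in hand, the proof is immediate. Apply the lemma with $H = P$: the hypothesis that every $w$-value $w(g_1,\ldots,g_n)$ with $g_i \in P$ has order dividing $p^e$ then transfers verbatim to every $\delta_n$-value on elements of $P$. That puts $G$ in the setting of Theorem~\ref{aaaa}, which yields $\lambda_p(G) \le n + e - 1$, exactly the assertion of Corollary~\ref{aabb}.

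There is no substantive obstacle here, and I would not expect the proof to take more than a few lines in the paper. All of the difficulty of the corollary is absorbed into Theorem~\ref{aaaa}; the only nontrivial external input is the cited lemma on expressibility of $\delta_n$-values as $w$-values, which belongs to the standard machinery of multilinear commutator words and is stable under restriction to any subgroup.
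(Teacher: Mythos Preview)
Your proposal is correct and matches the paper's own argument exactly: the paper states that Corollary~\ref{aabb} is immediate from Theorem~\ref{aaaa} together with the fact (\cite[Lemma~4.1]{S2}) that every $\delta_n$-value is a $w$-value when $w$ is a multilinear commutator of weight $n$. There is nothing to add.
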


Furthermore, Theorem \ref{aaaa} enables one to bound the nonsoluble length of a finite group satisfying the law $w^e\equiv1$ for some multilinear commutator words $w$.

\begin{corollary}\label{a44} Let $w$ be a multilinear commutator word of weight $n$, and let $e$ be a positive integer. Suppose that in a finite group $G$ all $w$-values have order dividing $e$. Then $\lambda(G)$ is bounded in terms of $e$ and $n$ only.
\end{corollary}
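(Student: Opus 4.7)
The plan is to deduce Corollary~\ref{a44} by applying Corollary~\ref{aabb} at the two odd primes $3$ and $5$, combining the resulting bounds via a classification-theoretic input and the sub-additivity of $\lambda$ under normal extensions.

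The hypothesis passes to every section: if $H/N$ is a section of $G$, each $w$-value in $H/N$ is the image of a $w$-value in $H \leq G$, and therefore has order dividing $e$. For any odd prime $p$ and any Sylow $p$-subgroup $P$ of $H/N$, a $w$-value on $P$ lies in $w(P) \leq P$ and is a $p$-element of order dividing $e$, hence dividing $p^{v_p(e)}$; Corollary~\ref{aabb} then gives $\lambda_p(H/N) \leq n + v_p(e) - 1$. Writing $M = n + \max(v_3(e), v_5(e)) - 1$, we have $\lambda_3(H/N), \lambda_5(H/N) \leq M$ for every section $H/N$ of $G$.

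The classification of finite simple groups provides the key fact that every nonabelian finite simple group has order divisible by $3$ or by $5$. Indeed, by a theorem of Thompson the only nonabelian simple groups of order coprime to $3$ are the Suzuki groups $\mathrm{Sz}(q)$, $q = 2^{2k+1}$, and the congruence $q^2 \equiv -1 \pmod{5}$ shows $5 \mid |\mathrm{Sz}(q)|$. It follows that any finite group which is simultaneously $3$-soluble and $5$-soluble must be soluble.

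Now I would take a normal series $1 = G_0 \leq \cdots \leq G_{2a+1} = G$ realizing $\lambda_3(G) = a \leq M$, with odd-indexed factors $3$-soluble and even-indexed ones direct products of nonabelian simples of order divisible by $3$. For each $3$-soluble factor $K = G_{2i+1}/G_{2i}$, take a $\lambda_5$-series of $K$ of length at most $M$; its $5$-soluble factors are also $3$-soluble as sections of $K$, and hence soluble by the previous paragraph, so this is in fact a $\lambda$-series of $K$ and $\lambda(K) \leq M$. Since each semisimple factor contributes $\lambda = 1$, the sub-additivity $\lambda(G) \leq \sum_j \lambda(G_{j+1}/G_j)$, obtained by concatenating $\lambda$-series of the factors, yields
\[
\lambda(G) \leq (a+1) M + a \leq (M+1) M + M = M^{2} + 2 M,
\]
a bound depending only on $n$ and $e$. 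The only non-routine step is the CFSG-based identification of Suzuki groups as the unique $3'$-simples, at the same level of classification input already used in Theorem~\ref{aaaa} via Schreier's conjecture.
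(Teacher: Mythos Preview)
Your argument is correct and follows a genuinely different route from the paper's own proof. The paper first invokes the focal subgroup theorem for outer commutator words \cite{focal} (which in turn rests on the solution of the Ore conjecture \cite{lost}) to show that every prime dividing the order of a nonabelian simple section of $G$ already divides $e$; it then runs an induction on the number of such primes, applying Corollary~\ref{aabb} at one odd prime of $\sigma(G)$ per step. You sidestep \cite{focal} and \cite{lost} entirely by fixing the two primes $3$ and $5$ in advance and using the classification-theoretic fact that every nonabelian finite simple group has order divisible by $3$ or by $5$; this converts the combination of a $\lambda_3$-series and a $\lambda_5$-series directly into a $\lambda$-series. Your approach therefore uses less external machinery and yields the explicit quadratic bound $M^2+2M$ with $M=n+\max(v_3(e),v_5(e))-1$, whereas the paper's iterated induction produces a bound that grows with the number of prime divisors of $e$.

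Two minor remarks. First, the precise attribution to Thompson is not quite right: Thompson's $N$-group work classifies minimal simple groups, and the statement that the Suzuki groups are the only nonabelian simple groups of order coprime to $3$ is a consequence of the full classification of finite simple groups---but this is at the same level of input already used for Schreier's conjecture in Theorem~\ref{aaaa}, so no new dependence is introduced. Second, your one-line justification of sub-additivity by ``concatenating $\lambda$-series of the factors'' silently passes through subnormal (rather than normal) series; the underlying inequality $\lambda(G)\le\lambda(N)+\lambda(G/N)$ for $N\trianglelefteq G$ is standard (and follows easily from the upper $\lambda$-series), so this is not a gap, only a point where a reader may want a sentence more.
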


The above result will be deduced from Corollary \ref{aabb} using a version of the focal subgroup theorem for multilinear commutators \cite{focal}. In turn, that result uses the famous theorem that every element in a nonabelian simple finite group is a commutator (the solution of the Ore problem) \cite{lost}.

\section{Preliminaries}

All groups considered in this paper are finite. Throughout the paper the soluble radical of a group $G$, the largest normal soluble subgroup, is denoted by $R(G)$. The largest normal $p$-soluble subgroup is called the \emph{$p$-soluble radical} and it will be denoted by  $R_p(G)$.

Consider the quotient $\bar G=G/R_p(G)$ of $G$ by its $p$-soluble radical. The socle $Soc(\bar G)$, that is, the product of all minimal normal subgroups of $\bar G$, is a direct product  $Soc(\bar G)=S_1\times\dots\times S_m$ of nonabelian simple groups $S_i$ of order divisible by $p$. The group $G$ induces by conjugation a permutational action on the set $\{S_1, \dots , S_m\}$. Let $K_p(G)$ denote the kernel of this action. In \cite{junta2} $K_p(G)$ was given the name of the \emph{$p$-kernel subgroup} of $G$. Clearly, $K_p(G)$ is the full inverse image in $G$ of $\bigcap N_{\bar G}(S_i)$. The following lemma was proved in \cite{junta2}. It depends on the classification of finite simple groups.

\begin{lemma}\label{kern}
The $p$-kernel subgroup $K_p(G)$ has non-$p$-soluble length at most 1.
\end{lemma}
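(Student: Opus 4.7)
The plan is to exhibit an explicit normal series in $K = K_p(G)$ that realises $\lambda_p(K) \leq 1$. Set $K_0 = 1$, $K_1 = R_p(G)$, let $K_2$ be the preimage in $K$ of the socle $L = Soc(\bar G) = S_1 \times \dots \times S_m$, and $K_3 = K$. The bottom factor $K_1/K_0 = R_p(G)$ is $p$-soluble by definition, while $K_2/K_1 \cong L$ is a direct product of nonabelian finite simple groups of order divisible by $p$. All that remains is to show that the top factor $\bar K / L$ is $p$-soluble, where $\bar K = K/R_p(G)$.

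For this step I would first observe that $R_p(\bar G) = 1$ by construction. Since any nilpotent normal subgroup is $p$-soluble, the Fitting subgroup $F(\bar G)$ is trivial, and consequently the generalized Fitting subgroup $F^*(\bar G)$ reduces to the layer $E(\bar G)$, which in turn equals $Soc(\bar G) = L$. The standard property $C_{\bar G}(F^*(\bar G)) \leq F^*(\bar G)$ then yields $C_{\bar G}(L) \leq L$. Because $L$ is a direct product of nonabelian simple groups we have $Z(L) = 1$, and so $C_{\bar K}(L) = 1$. Conjugation therefore embeds $\bar K$ into $\mathrm{Aut}(L)$, and by the very definition of the $p$-kernel subgroup this embedding lands inside $\prod_i \mathrm{Aut}(S_i)$, while $L$ itself embeds as $\prod_i \mathrm{Inn}(S_i)$.

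Hence $\bar K / L$ embeds into $\prod_i \mathrm{Out}(S_i)$. By Schreier's Conjecture, known to hold as a consequence of the classification of finite simple groups, each $\mathrm{Out}(S_i)$ is soluble, so $\bar K / L$ is soluble and in particular $p$-soluble. Putting these three factors together produces the required series, so $\lambda_p(K) \leq 1$.

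The principal load in the argument is the classification-dependent invocation of Schreier's Conjecture; this is also where the lemma ceases to be elementary. Everything else amounts to correct bookkeeping with the generalized Fitting subgroup, and the one place to be careful is to verify that $R_p(\bar G) = 1$ really does force $F(\bar G) = 1$ (which it does, since nilpotent groups are $p$-soluble) so that the identification $F^*(\bar G) = Soc(\bar G)$ is legitimate.
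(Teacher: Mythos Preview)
Your argument is correct. The paper itself does not prove this lemma; it merely cites \cite{junta2} and notes that the result depends on the classification of finite simple groups. Your proof is the expected one: form the normal series $1 \leq R_p(G) \leq (\text{preimage of } Soc(\bar G)) \leq K_p(G)$, embed the top quotient into $\prod_i \mathrm{Out}(S_i)$, and invoke Schreier's Conjecture. This is presumably the argument given in \cite{junta2}, and it matches the paper's remark about classification dependence.

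One minor stylistic point: rather than routing through the identification $F^*(\bar G) = Soc(\bar G)$ (which is true when $F(\bar G)=1$, but requires the additional observation that every component of $\bar G$ already lies in $Soc(\bar G)$), you can argue more directly. The centraliser $C = C_{\bar G}(L)$ is normal in $\bar G$ and satisfies $C \cap L = Z(L) = 1$; if $C \neq 1$ it would contain a minimal normal subgroup of $\bar G$, which by definition lies in $L$, a contradiction. This gives $C_{\bar K}(L) = 1$ immediately and avoids the detour through the generalized Fitting subgroup.
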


We denote by $|a|$ the order of the element $a\in G$. Let $X_G(a)$ denote the set of all $x\in G$ such that the commutator $[x,a,a]$ has maximal possible order. Thus, $$X_G(a)=\{x\in G;\ |[x,a,a]|\geq|[y,a,a]| \text{ for all } y\in G\}.$$

Our immediate purpose is to prove the following proposition.

\begin{proposition}\label{bbb}
Let $G$ be a finite group and $P$ a Sylow $p$-subgroup of $G$ for an odd prime $p$. Let $S_1\times\dots\times S_r$ be a normal subgroup of $G$ equal to a direct product of nonabelian simple groups $S_i$ of order divisible by $p$. Let $a\in P$ and $b\in X_P(a)$, and let $|[b,a,a]|=q>1$. Then $[b,a,a]$ has no orbit of length $q$ on the set $\{S_1,S_2,\ldots,S_r\}$ in the permutational action of $G$ induced by conjugation.
\end{proposition}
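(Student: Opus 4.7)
I would proceed by contradiction. Suppose that $c := [b, a, a]$ has an orbit $\mathcal{O} = \{T_0, T_1, \ldots, T_{q-1}\}$ of length $q$ on $\Omega := \{S_1, \ldots, S_r\}$, with $T_j := T_0^{c^j}$. Since $a, b \in P$, one has $c \in P$ and $q = |c|$ is a power of $p$; moreover the induced permutation of $\Omega$ by $c$ must have order exactly $q$ (the orbit length $q$ divides $|\bar c|$, which divides $|c| = q$). The aim is to exhibit $b' \in P$ with $|[b', a, a]| > q$, which will contradict $b \in X_P(a)$. The natural candidate is $b' := by$, where $y$ is a nontrivial element of order $p$ in the Sylow $p$-subgroup $P \cap T_0$ of the nonabelian simple group $T_0$ (nontrivial since $p \mid |T_0|$).

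Let $N := S_1 \times \cdots \times S_r$ and $d := [b, a]$. Using $[uv, w] = [u, w]^v [v, w]$ repeatedly together with the identity $[d, y] = (y^d)^{-1} y$, one computes $m := [d, y][y, a] = y^{-d} y^a$ and then
\[
[by, a, a] = c \cdot n, \qquad n := m^{-c} m^a = y^{-ac}\, y^{dc}\, y^{-da}\, y^{a^2} \in N,
\]
where $y^{-g}$ denotes $(y^g)^{-1}$. Since both $[by, a, a]$ and $c$ lie in $P$, the element $n$ is a $p$-element in $N \cap P$; since the reduction of $cn$ modulo $N$ is $\tilde c$ of order $q$, the order $|cn|$ is a $p$-power multiple of $q$, so it suffices to verify $(cn)^q \ne 1$. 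Using $c^q = 1$, this power equals the $\langle c\rangle$-norm $n^{c^{q-1}} \cdots n^c \cdot n$.

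Project to the $c$-invariant subgroup $M := T_0 \times \cdots \times T_{q-1}$, inside which $M\langle c\rangle$ is isomorphic to the wreath product $T_0 \wr \langle c\rangle$. A direct computation in this wreath product shows that every $T_j$-coordinate of $(cn_M)^q$ is a cyclic rotation of a single product $w := \tilde u_1 \tilde u_2 \cdots \tilde u_{q-1} \tilde u_0 \in T_0$, where $\tilde u_i \in T_0$ denotes the $c^{-i}$-pullback of the $T_i$-coordinate of the $M$-projection $n_M$ of $n$. Since cyclic rotations of a product are conjugate in $T_0$, $(cn)^q \ne 1$ if and only if $w \ne 1$ in $T_0$. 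Of the four summands of $n$, only those $g \in \{ac, dc, da, a^2\}$ for which $T_0^g \in \mathcal{O}$ contribute to the $\tilde u_i$.

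The principal technical obstacle is the case analysis needed to show that $w \ne 1$ for a suitable choice of $y$. When exactly one summand contributes, $w$ is a nontrivial conjugate of $y^{\pm 1}$ and we are done. When several contribute, cancellations must be ruled out by tracking how $\bar a$ and $\bar d = [\bar b, \bar a]$ move $T_0$ relative to $\mathcal{O}$, subject to the constraint $\bar c = \bar d^{-1} \bar d^{\bar a}$; a cancellation valid for every $y \in P \cap T_0$ of order $p$ would entail a restrictive centralising condition on the order-$p$ elements of $T_0$, incompatible with $T_0$ being a nonabelian simple group. In the residual case that no summand lands in $\mathcal{O}$, one selects $y$ instead from a different simple factor in the $G$-orbit of $T_0$. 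The oddness of $p$ is expected to enter precisely at this stage, excluding the $2$-torsion-type cancellations that could otherwise defeat the argument.
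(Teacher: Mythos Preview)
Your perturbation idea---replace $b$ by $by$ with $y$ a nontrivial $p$-element of $P\cap T_0$ and show $|[by,a,a]|>q$---is exactly what the paper does in its first lemma, and your formula $[by,a,a]=c\,n$ with $n=y^{-ac}y^{dc}y^{-da}y^{a^2}$ is correct. But the paper uses this computation only to prove a \emph{partial} conclusion, namely that $a$ must stabilise the orbit $\mathcal O$ setwise: assuming $T_0^a\notin\mathcal O$, two of the four summands drop out, the $c$-norm of $n_M$ collapses to either $y$ or $y\cdot(y^{a^2})^{c^i}$, and the latter being trivial forces $y$ conjugate to $y^{-1}$ in $P$, impossible for odd $p$. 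An identity $[a^{-b},a]^{-1}=[b^{-a^b},a^{-b},a^{-b}]$ then lets one run the same argument with $a^{-b}$ in place of $a$, so $a^b$ also stabilises $\mathcal O$.

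The decisive step, which your outline does not contain, is what happens once \emph{both} $a$ and $a^b$ stabilise $\mathcal O$. In that situation all four of your summands lie in $M$, and there is no evident way to force $w\ne 1$ by a direct calculation; your appeal to an unspecified ``restrictive centralising condition incompatible with $T_0$ being simple'' is not a proof, and the ``residual case'' of choosing $y$ from a different factor does not arise here. The paper instead abandons the perturbation method entirely and observes that the induced permutations $\bar a,\bar a^{\,b}$ on $\mathcal O$ lie in a Sylow $p$-subgroup $Q$ of $\mathrm{Sym}(q)$, while their commutator $[\bar a^{-b},\bar a]$ is a full $q$-cycle. Since a $q$-cycle has nontrivial image in the abelianisation $Q/Q'\cong C_p^{\,k}$ (where $q=p^k$), it cannot lie in $Q'$, and this is the contradiction. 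This structural fact about Sylow $p$-subgroups of symmetric groups is the missing ingredient in your proposal.
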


The proof of the above proposition will be divided into several lemmas.

\begin{lemma}\label{duduk}
Assume the hypothesis of Proposition \ref{bbb} and suppose that $[b,a,a]$ has an orbit of length $q$ on the set $\{S_1,S_2,\ldots,S_r\}$. Let $\{S_{i_1},S_{i_2},\ldots,S_{i_q}\}$ be such an orbit. Then $S^a\in\{S_{i_1},S_{i_2},\ldots,S_{i_q}\}$ for any $S\in\{S_{i_1},S_{i_2},\ldots,S_{i_q}\}$.
\end{lemma}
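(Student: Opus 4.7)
\smallskip
\noindent\textbf{Proof plan.} I argue by contradiction: suppose some $S\in O:=\{S_{i_1},\ldots,S_{i_q}\}$ satisfies $S^a\notin O$. The aim is to contradict either the maximality of $|[b,a,a]|=q$ built into $b\in X_P(a)$, or the fact that $\phi(P)$ is a $p$-group, where $\phi\colon G\to\mathrm{Sym}(\{S_1,\ldots,S_r\})$ denotes the permutation action induced by conjugation on the simple factors.

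First, I set up the framework. Writing $\alpha=\phi(a)$ and $\gamma=\phi(c)$ with $c=[b,a,a]$, we have $|\gamma|\mid|c|=q$, while $\gamma$ admits an orbit of length $q$, so $|\gamma|=q$ and $\gamma|_O$ is a single $q$-cycle. Since $a,c\in P$, both $\alpha$ and $\gamma$ lie in the $p$-subgroup $\phi(P)$ of $\mathrm{Sym}(\{S_1,\ldots,S_r\})$.

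Next, I would extract a partner orbit from $c^a$. Since $c^a=[b^a,a,a]$ with $b^a\in P$, maximality gives $|c^a|\le q$; but conjugation preserves order, so $|c^a|=|c|=q$ and $c^a$ also realises the maximum. The length-$q$ orbit of $c^a$ passing through $\alpha^{-1}(S_{i_1})$ is $\alpha^{-1}(O)$, which under the contradiction assumption is distinct from $O$. Consequently the $\langle\alpha,\gamma\rangle$-orbit $W$ containing $O$ strictly contains $O$ and has size a power of $p$ exceeding $q$.

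The heart of the argument is then to turn this mismatch into a concrete contradiction: either to exhibit an element of $\langle\alpha,\gamma\rangle\le\phi(P)$ whose permutation order is not a $p$-power, or to construct $b'\in P$ with $|[b',a,a]|>q$, violating the choice of $b$. Concretely, I would examine the cycle structure on $W$ of a word such as $\alpha\gamma$, $\alpha\gamma^{-1}$, or an iterated commutator $[\gamma,\alpha,\ldots,\alpha]$, exploiting that $\gamma|_O$ is a single $q$-cycle and that $\gamma$ has the specific double-commutator form $[[\phi(b),\alpha],\alpha]$, in order to force a cycle of non-$p$-power length. The main obstacle is precisely this final combinatorial step: transitive $p$-subgroups of $\mathrm{Sym}(W)$ containing a regular $\langle\gamma\rangle$-orbit $O\subsetneq W$ do exist in the abstract, so the mere fact that $\alpha,\gamma$ are $p$-elements with $\alpha(O)\neq O$ cannot suffice; the specific double-commutator shape of $\gamma$ together with the maximality of $|c|$ over all of $P$ must be used essentially to rule out non-preservation.
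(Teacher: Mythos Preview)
Your proposal has a genuine gap, and you are candid about it: the ``heart of the argument'' is left as an obstacle you do not overcome. Working purely inside the permutation image $\phi(P)\le\mathrm{Sym}(\{S_1,\ldots,S_r\})$ cannot succeed, for exactly the reason you state---there is no abstract obstruction to a $p$-element $\alpha$ moving a regular $\langle\gamma\rangle$-orbit off itself inside a $p$-group of permutations. The double-commutator shape of $\gamma$ and the maximality of $|c|$ over $P$ are constraints on elements of $G$, not on their permutation shadows, and you never bring any information from $G$ beyond what survives under $\phi$.

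The paper's argument does not stay in $\mathrm{Sym}$. It exploits the hypothesis that each $S_i$ has order divisible by $p$ by picking a nontrivial $x\in P\cap S_1$ and replacing $b$ by $bx$. The point is that $x$ lies in the kernel of $\phi$, so the commutator identity
\[
[bx,a,a]=\underbrace{[[b,a]^x,a]^{x^{-1}x^a}}_{=:d}\cdot\underbrace{[x,a,a]}_{=:y}
\]
gives a factor $d$ with the \emph{same} permutation action on $\{S_1,\ldots,S_r\}$ as $[b,a,a]$ (in particular $d$ still regularly permutes $S_1,\ldots,S_q$), while $y$ is a genuine element of $S_1 S_1^a S_1^{a^2}$. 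Maximality forces $[bx,a,a]^q=1$, i.e.\ $y^{d^{q-1}}\cdots y^d y=1$, and one then reads off the $S_1$-coordinate of this product. Under the assumption $S_1^a\notin\{S_1,\ldots,S_q\}$ that coordinate is either $x$ or $x\cdot(x^{a^2})^{d^i}$; the first is visibly nontrivial, and the second being trivial would make $x$ conjugate to $x^{-1}$ in the $p$-group $P$, impossible for odd $p$. This is where the oddness of $p$ enters, and it is invisible from the permutation picture alone. Your plan never introduces such an $x$, so it has no access to this mechanism.
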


\begin{proof} Without loss of generality we assume that $i_j=j$ for $j=1,\dots,q$. Thus, $[b,a,a]$ regularly permutes $S_1,S_2,\ldots,S_q$.
Note that $P_i=P\cap S_i$ is a Sylow $p$-subgroup of $S_i$ for every $i$, and $P_i\ne 1$ by hypothesis. Suppose that $S_1^a\not\in \{S_1,S_2,\ldots,S_q\}$. Choose a nontrivial element $x\in P\cap S_1$ and consider the element
$$[bx,a,a]=[[b,a]^x,a]^{{x^{-1}x^a}}[x,a,a].$$
Set $d=[[b,a]^x,a]^{{x^{-1}x^a}}$ and $y=[x,a,a]$. We note that the permutational action of $d$ on $\{S_1,S_2,\ldots,S_r\}$ is the same as that of $[b,a,a]$. In particular, $d$ regularly permutes $S_1,S_2,\ldots,S_q$.

Further, we have $y=x^{-a}x (x^{-1}x^a)^a= x^{-a}x x^{-a}x^{a^{2}}$. Since $S_1^a\neq S_1$, the elements $x$ and $x^{-a}$ commute and therefore $y=x^{-2a}xx^{a^{2}}$. By the assumptions $x^{-2a}$ does not belong to the product $S_1S_2\ldots S_q$. If $x^{a^{2}}$ does not belong to the product $S_1S_2\ldots S_q$, then the projection of the element $y^{d^{-1}}y^{d^{-2}}\dots y^dy$ on the group $S_1$ is precisely $x$. If $x^{a^{2}}\in S_1S_2\ldots S_q$, then the projection of $y^{d^{-1}}y^{d^{-2}}\dots y^dy$ on $S_1$ is of the form $x(x^{a^{2}})^{d^i}$. 

Since $b\in X_P(a)$, the order of $[bx,a,a]$ must divide $q$ and we have $$[bx,a,a]^q=y^{d^{-1}}y^{d^{-2}}\dots y^dy=1.$$ In particular, the projection of $y^{d^{-1}}y^{d^{-2}}\dots y^dy$ on $S_1$ must be trivial. Since $x\neq1$, we conclude that $x(x^{a^{2}})^{d^i}=1$ for some $i$. It follows that $x$ is conjugate to $x^{-1}$ in $P$. Since $p$ is odd, this implies that $x=1$. (Of course, in a group of odd order only the trivial element is conjugate to its inverse.) This is a contradiction. The lemma follows.
\end{proof}

Thus, we have shown that if $[b,a,a]$ has an orbit $\{S_{i_1},S_{i_2},\ldots,S_{i_q}\}$ of length $q$, then the element $a$ stabilizes the orbit. Next, we will show that $a^b$ stabilizes the orbit as well.

\begin{lemma}\label{bubuk}
Under the hypothesis of Lemma \ref{duduk} the element $a^b$ stabilizes the orbit $\{S_{i_1},S_{i_2},\ldots,S_{i_q}\}$.
\end{lemma}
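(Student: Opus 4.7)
The plan is to mirror the proof of Lemma~\ref{duduk} as closely as possible: argue by contradiction, perturb $b$ inside $P$, use the maximality $b\in X_P(a)$ to force the perturbed triple commutator to have order dividing $q$, and then project onto a carefully chosen simple factor.

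Suppose for contradiction that $a^b$ does not stabilize $O=\{S_{i_1},\ldots,S_{i_q}\}$. Since $a^b=a\cdot[a,b]$ and $a$ stabilizes $O$ by Lemma~\ref{duduk}, this is equivalent to $[b,a]$ failing to stabilize $O$. Relabel so that $S_{i_j}=S_j$; we may then assume $S_1^{[b,a]}\notin O$. Pick a nontrivial $x\in P\cap S_1$ and take the same perturbed witness $bx\in P$ used in Lemma~\ref{duduk}. The commutator identity
\[
[bx,a,a]=[[b,a]^x,a]^{[x,a]}\cdot[x,a,a]=d\cdot y
\]
still applies, and $b\in X_P(a)$ forces $|[bx,a,a]|$ to divide $q$, hence $(dy)^q=1$. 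Since $x$ lies in the normal subgroup $S_1\times\cdots\times S_r$, conjugation by $x$ acts trivially on $\{S_1,\ldots,S_r\}$, so $d$ induces the same permutation on $\{S_1,\ldots,S_r\}$ as $[b,a,a]$, regularly permuting $S_1,\ldots,S_q$.

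The step where the argument must diverge from Lemma~\ref{duduk} is the source of the commutation used to simplify $y$ and its conjugates $y^{d^{-i}}$. In Lemma~\ref{duduk} the hypothesis $S_1^a\notin O$ gave $[x,x^{-a}]=1$ at once. Here $a$ stabilizes $O$, so every $x^{a^i}$ lies inside $S_1\times\cdots\times S_r$ and no such commutation comes from $a$ alone; the required commutation must instead come from $S_1^{[b,a]}\notin O$. Concretely, from $[b,a]^x=[b,a]\cdot[[b,a],x]$ one computes $[[b,a],x]=(x^{-1})^{[b,a]}\cdot x$, and since $(x^{-1})^{[b,a]}\in S_1^{[b,a]}$ lies outside $O$ while $x\in S_1$ lies inside $O$, these two elements commute. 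This commutation propagates through the expansion of $d$ and of each conjugate $y^{d^{-i}}$. Projecting $(dy)^q=1$ onto $S_1$ and using the new commutation to cancel most terms should yield an equation of the form $x=(x^{-1})^h$ for some $h\in P$, so $x$ is $P$-conjugate to $x^{-1}$. Since $p$ is odd and $P$ is a $p$-group, this forces $x=1$, contradicting the choice of $x$.

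The main obstacle is executing the projection cleanly: several terms from $d^q$ and from the various $y^{d^{-i}}$ contribute to the $S_1$-projection, and they must be paired correctly using the commutation coming from $S_1^{[b,a]}\notin O$. It is possible that the witness $bx$ is not quite optimal here, and that a variant such as $b\cdot x^{a^b}$ or $b\cdot x^{[b,a]}$ (arranged so that a conjugate of $x$ landing outside $O$ appears explicitly inside the factor $y=[\,\cdot\,,a,a]$) is what is actually needed. As in Lemma~\ref{duduk}, I expect the analysis to split into sub-cases according to whether further positions (for instance $S_1^{a^2}$ or $S_1^{a\cdot[b,a]}$) lie inside or outside $O$, each handled by a parallel projection computation.
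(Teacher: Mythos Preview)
Your proposal is not a proof but a plan with explicitly acknowledged gaps (``should yield'', ``it is possible that the witness $bx$ is not quite optimal'', ``I expect the analysis to split into sub-cases''). Even as a plan it is heading in a laborious direction: once $a$ stabilizes $O$, all the elements $x^{a^i}$ stay inside $S_1\times\cdots\times S_q$ but can land in any of the factors, so the neat cancellation that made the projection in Lemma~\ref{duduk} work disappears, and the case analysis you anticipate would be substantial. You have not identified which perturbation actually makes the projection tractable, and there is no guarantee that any simple variant of $bx$ does.

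The paper avoids all of this with a one-line reduction back to Lemma~\ref{duduk} itself. From $[b,a,a]=[a^{-b},a]^a$ and the fact that $a$ stabilizes $O$, the element $[a^{-b},a]$ also permutes $S_1,\ldots,S_q$ regularly. The key observation is the identity
\[
[a^{-b},a]^{-1}=[\,b^{-a^{b}},\,a^{-b},\,a^{-b}\,],
\]
so the regular $q$-cycle is itself a commutator of the form $[c,a',a']$ with $a'=a^{-b}\in P$ and $c=b^{-a^{b}}\in P$. Since $|[c,a',a']|=q$ is already the maximum possible order of such a commutator, $c\in X_P(a')$. Now Lemma~\ref{duduk}, applied verbatim with $a'$ in place of $a$ and $c$ in place of $b$, gives that $a'=a^{-b}$ stabilizes $O$, hence so does $a^{b}$. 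No new projection computation is needed at all; the work was already done in Lemma~\ref{duduk}.
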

\begin{proof} Again, without loss of generality we assume that $i_j=j$ for $j=1,\dots,q$. By Lemma \ref{duduk}, both $[b,a,a]$ and $a$ stabilize the orbit $S_1,S_2,\ldots,S_q$. Since $[b,a,a]=[a^{-b},a]^a$, it follows that  $[a^{-b},a]$ regularly permutes the subgroups $S_1,S_2,\ldots,S_q$. We note that $[a^{-b},a]^{-1}=[b^{-a^{b}},a^{-b},a^{-b}]$ and therefore also $[b^{-a^{b}},a^{-b},a^{-b}]$ regularly permutes the subgroups $S_1,S_2,\ldots,S_q$. It is clear that $|[t,a^{-b},a^{-b}]|=q$ for every $t\in X_P(a^{-b})$ and so $b^{-a^{b}}\in X_P(a^{-b})$. Lemma \ref{duduk} now tells us that $a^{-b}$ stabilizes the orbit $S_1,S_2,\ldots,S_q$. The result follows.
\end{proof}

We are now ready to complete the proof of Proposition \ref{bbb}. 

\begin{proof}[Proof of Proposition \ref{bbb}] We already know that both $a$ and $a^b$ stabilize the set $\{S_1,S_2,\ldots,S_q\}$. Moreover, $[a^{-b},a]$ regularly permutes the subgroups $S_1,S_2,\ldots,S_q$. Thus, we get a natural homomorphism of the subgroup $\langle a,a^b\rangle$ into the Sylow $p$-subgroup, say $Q$, of the symmetric group on $q$ symbols. The homomorphism maps the commutator $[a^{-b},a]$ to the cycle $(1,2,\dots,q)$. However the cycle $(1,2,\dots,q)$ does not belong to the commutator subgroup $Q'$. This leads to a contradiction. The proof is now complete.
\end{proof}

\section{Proofs of the main results}

\begin{proof}[Proof of Theorem \ref{aaaa}]  Recall that $p$ is an odd prime, 
and $G$ a finite group in which all $\delta_n$-values on elements of a Sylow $p$-subgroup $P$ have order dividing $p^e$. Our purpose is to prove that $\lambda_p(G)\leq n+e-1$.

Use induction on $n$. If $n=1$, the Sylow subgroup $P$ has exponent dividing $p^e$. We deduce from \cite[Lemma 4.2]{junta2} that $P^{p^{e-1}}\leq K_p(G)$ and so by induction $\lambda_p(G/K_p(G))\leq e-1$. Since $\lambda_p(K_p(G))\leq1$ (Lemma \ref{kern}), we have $\lambda_p(G)\leq e$ and the result follows. Thus, we will assume that $n\geq 2$.

Let $l$ be the maximal number for which there exist a $\delta_{n-1}$-value $a$ in elements of $P$ and $b\in X_P(a)$ such that $|[b,a,a]|=p^l$. Since $[b,a,a]=[a^{-b},a]^a$, it follows that $[b,a,a]$ is a $\delta_{n}$-value in elements of $P$ and hence $l\leq e$. We will use $l$ as a second induction parameter. Our aim is to show that $\lambda_p(G)\leq n+l-1$.

If $l=0$, then $\langle a^P\rangle$ is abelian for every $\delta_{n-1}$-value $a\in P$. By \cite[Lemma 4.3]{junta2} $\langle a^P\rangle\leq K_p(G)$ and so the image of $P$ in $G/K_p(G)$ is soluble with derived length at most $n-1$, whence the result follows by induction.

Assume that $l\geq 1$. Choose  a $\delta_{n-1}$-value $a$ in elements of $P$ and $b\in X_P(a)$ such that $|[b,a,a]|=p^l$. Proposition \ref{bbb} implies that the order of $[b,a,a]$ in $G/K_p(G)$ divides $p^{l-1}$. Hence, by induction, $\lambda_p(G/K_p(G))\leq n+l-2$ and so $\lambda_p(G)\leq n+l-1$.
Therefore indeed $\lambda_p(G)\leq n+e-1$.
\end{proof}

As was mentioned in the introduction, Corollary \ref{aabb} is immediate from Theorem \ref{aaaa} and the fact that if $w$ is a multilinear commutator word on $n$ variables, then each $\delta_n$-value in a group $G$ is a $w$-value. We will now prove Corollary \ref{a44}.

\begin{proof}[Proof of Corollary \ref{a44}] Recall that here $G$ is a group in which all $w$-values  have order dividing $e$. We wish to prove that $\lambda(G)$ is bounded in terms of $e$ and $n$ only. Denote by $\pi$ the set of prime divisors of $e$. Let $w(G)$ denote the subgroup of $G$ generated by $w$-values. By the main result of \cite{focal} each Sylow subgroup of $w(G)$ is generated by powers of $w$-values. It follows that the set of prime divisors of the order of $w(G)$ is a subset of $\pi$. Since $G/w(G)$ is soluble, the prime divisors of the order of any nonabelian simple section of $G$ belong to $\pi$. In what follows we denote by $\sigma(H)$ the set of all primes that divide the order of at least one nonabelian simple section of a group $H$.

We know that $\sigma(G)\leq\pi$. Therefore the number of primes in $\sigma(G)$ is bounded by a function of $e$ and hence we can use induction on $|\sigma(G)|$. If $\sigma(G)$ is empty, then $\lambda(G)=0$. Thus, we assume that $\sigma(G)$ is non-empty and then it is clear that $\sigma(G)$ contains an odd prime $p$. By Corollary \ref{aabb} $\lambda_p(G)$ is bounded in terms of $e$ and $n$ only. This means that $G$ has a normal series of bounded length each of whose quotients either is $p$-soluble or is a direct product of nonabelian simple groups of orders divisible by $p$. Thus, it is now sufficient to bound $\lambda(Q)$ for every $p$-soluble quotient $Q$ of the series. Since $p\not\in\sigma(Q)$, it is clear that $|\sigma(Q)|<|\sigma(G)|$ and so by induction the result follows.
\end{proof}

\end{document}